\documentclass[11pt]{amsart}

\usepackage[T1]{fontenc}
\usepackage[utf8]{inputenc}
\usepackage{lmodern}
\usepackage{microtype}
\usepackage{amsmath,amssymb,amsthm,mathtools}
\usepackage{enumitem}
\usepackage{booktabs}
\usepackage[hidelinks]{hyperref}
\usepackage[nameinlink,capitalize,noabbrev]{cleveref}

\newtheorem{theorem}{Theorem}[section]
\newtheorem{proposition}[theorem]{Proposition}
\newtheorem{lemma}[theorem]{Lemma}
\newtheorem{corollary}[theorem]{Corollary}
\newtheorem{conjecture}{Conjecture}

\crefname{theorem}{theorem}{theorems}
\Crefname{theorem}{Theorem}{Theorems}
\crefname{proposition}{proposition}{propositions}
\Crefname{proposition}{Proposition}{Propositions}
\crefname{lemma}{lemma}{lemmas}
\Crefname{lemma}{Lemma}{Lemmas}
\crefname{corollary}{corollary}{corollaries}
\Crefname{corollary}{Corollary}{Corollaries}
\crefname{conjecture}{conjecture}{conjectures}
\Crefname{conjecture}{Conjecture}{Conjectures}
\theoremstyle{definition}

\theoremstyle{remark}
\newtheorem{remark}[theorem]{Remark}
\crefname{definition}{definition}{definitions}
\Crefname{definition}{Definition}{Definitions}
\crefname{remark}{remark}{remarks}
\Crefname{remark}{Remark}{Remarks}

\newcommand{\PP}{\mathbb P}
\newcommand{\CC}{\mathbb C}
\newcommand{\QQ}{\mathbb Q}
\newcommand{\OO}{\mathcal O}
\newcommand{\Ric}{\operatorname{Ric}}
\newcommand{\rank}{\operatorname{rank}}
\newcommand{\Aut}{\operatorname{Aut}}
\newcommand{\Sym}{\operatorname{Sym}}
\newcommand{\FS}{\mathrm{FS}}
\newcommand{\divisor}{\operatorname{div}}

\title[Projectively induced K\"ahler--Einstein surfaces]
{Compact Projectively Induced K\"ahler--Einstein Surfaces}

\author{Mirel Caib{\u a}r}
\address{(Mirel Caib{\u a}r) Department of Mathematics\\
The Ohio State University\\
1760 University Drive, Mansfield, OH 44906, USA}
\email{caibar@math.ohio-state.edu}

\author{Andrea Loi}
\address{(Andrea Loi) Dipartimento di Matematica \\
Universit\`a di Cagliari, Via Ospedale 72, 09124 Cagliari (Italy)}
\email{loi@unica.it}

\subjclass[2020]{Primary 32Q20; Secondary 53C42, 53C55, 14J26, 14J45}
\keywords{K\"ahler immersion, K\"ahler--Einstein metric, projectively induced metric,
del Pezzo surface, Fano manifold, Fubini--Study metric, homogeneity}

\begin{document}

\begin{abstract}
We classify compact K\"ahler--Einstein surfaces whose metric is induced by a
holomorphic isometric immersion into a finite-dimensional complex projective
space. No symmetry assumption and no bound on the codimension are imposed. We
prove that the only such surfaces are
\[
(\PP^2,m g_{\FS})
\quad\text{and}\quad
\bigl(\PP^1\times\PP^1,m(g_{\FS}\oplus g_{\FS})\bigr),
\qquad m\in\mathbb Z_{>0},
\]
realized respectively by the Veronese and Segre--Veronese embeddings.

The main new ingredient is a codimension-independent exclusion of the entire
Fano-index-one branch, combining a common anticanonical root construction with
Gram--Gauss rank estimates and, in degree five, an equivariant curvature
argument.
Consequently, every connected compact K\"ahler--Einstein surface whose metric
is induced by a holomorphic isometric immersion into a finite-dimensional
complex projective space is homogeneous.
\end{abstract}

\maketitle

\section{Introduction}

A classical problem in K\"ahler geometry asks which K\"ahler manifolds can be
holomorphically and isometrically immersed into a complex space form. In the
positive-curvature case, one considers a K\"ahler manifold $(X,g)$ admitting a
holomorphic isometric immersion
\[
\varphi:(X,g)\longrightarrow (\PP^N,g_{\FS}),
\qquad N<\infty,
\]
and says that $g$ is \emph{projectively induced}.

A conjecture of Loi and Zedda
\cite[Conjecture~4.2]{LoiZedda2018} predicts that a complete
K\"ahler--Einstein manifold admitting a K\"ahler immersion into a
finite-dimensional complex projective space should be homogeneous. We shall
use the following compact form of their conjecture.

\begin{conjecture}[Loi--Zedda]\label{conj:homogeneity}
Let $(X,g)$ be a compact K\"ahler--Einstein manifold. If $g$ is induced by a
holomorphic isometric immersion into a finite-dimensional complex projective
space, then $(X,g)$ is homogeneous.
\end{conjecture}

Related homogeneity questions for projectively induced K\"ahler metrics were
studied by Loi, Salis and Zuddas \cite{LoiSalisZuddas2021}.

Hulin proved a fundamental first restriction: the Einstein constant of a
compact projectively induced K\"ahler--Einstein manifold is necessarily
positive \cite[Theorem]{Hulin2000}. Thus every compact example is Fano. Earlier
classification results in small codimension, beginning with work of Smyth and
Chern on hypersurfaces and continuing with Tsukada in codimension two, already
exhibited strong rigidity
\cite{Smyth1967,Chern1967,Tsukada1986}. Several further classification
results under symmetry assumptions were obtained by Manno and Salis
\cite{MannoSalis2022,MannoSalis2026}, while Di Scala and Sombra developed
complementary algebraic methods for toric Fano manifolds
\cite{DiScalaSombra2025}.

Very recently, Huang posted a preprint whose Theorem~B states the compact toric
classification in arbitrary complex dimension, and hence the toric case of the
Loi--Zedda conjecture \cite{Huang2026}. This result is independent of the
present argument and is not used below.  The two results are complementary:
\cite{Huang2026} treats the toric case in all dimensions,
whereas the del Pezzo surfaces of degrees one through five, precisely the
surfaces excluded in Step~3 below, are not toric, so none of the exclusions
obtained here follows from it.

The purpose of this paper is to remove all symmetry assumptions in complex
dimension two.

\begin{theorem}[Compact classification]\label{thm:main}
Let $(X^2,g)$ be a connected compact K\"ahler--Einstein surface and let
\[
\varphi:(X^2,g)\longrightarrow(\PP^N,g_{\FS}),
\qquad N<\infty,
\]
be a holomorphic isometry. Then $(X,g)$ is homogeneous.

More precisely, after replacing $\PP^N$ by the projective linear
span of $\varphi(X)$, there exists an integer $m\ge 1$ such that, up to
a biholomorphic isometry of the source and a projective-unitary
transformation of the target, exactly one of the following occurs:
\begin{enumerate}[label=\textup{(\roman*)}]
\item
\[
(X,g)\simeq \bigl(\PP^2,m g_{\FS}\bigr),
\]
and $\varphi$ is projectively-unitarily congruent to the standard
\emph{isometric $m$-th Veronese embedding}
\[
\nu_m:
\bigl(\PP^2,m g_{\FS}\bigr)
\longrightarrow
\left(
\PP^{\binom{m+2}{2}-1},
g_{\FS}
\right),
\]
which, in homogeneous coordinates $[Z_0:Z_1:Z_2]$, is given by
\[
\nu_m([Z_0:Z_1:Z_2])
=
\left[
\sqrt{\frac{m!}{\alpha_0!\alpha_1!\alpha_2!}}\,
Z_0^{\alpha_0}Z_1^{\alpha_1}Z_2^{\alpha_2}
\right]_{\alpha_0+\alpha_1+\alpha_2=m}.
\]
Here the target carries its standard Hermitian structure.  The
multinomial normalization is characterized by the identity
\[
\sum_{\alpha_0+\alpha_1+\alpha_2=m}
\frac{m!}{\alpha_0!\alpha_1!\alpha_2!}
|Z_0|^{2\alpha_0}|Z_1|^{2\alpha_1}|Z_2|^{2\alpha_2}
=
\bigl(|Z_0|^2+|Z_1|^2+|Z_2|^2\bigr)^m,
\]
and consequently
\[
\nu_m^*g_{\FS}=m g_{\FS}.
\]
In particular, if $\varphi$ is linearly full, then
\[
N=\binom{m+2}{2}-1.
\]

\item
\[
(X,g)\simeq
\left(
\PP^1\times\PP^1,
m(g_{\FS}\oplus g_{\FS})
\right),
\]
and $\varphi$ is projectively-unitarily congruent to the standard
\emph{isometric $(m,m)$ Segre--Veronese embedding}
\[
\operatorname{SV}_{m,m}:
\left(
\PP^1\times\PP^1,
m(g_{\FS}\oplus g_{\FS})
\right)
\longrightarrow
\left(
\PP^{(m+1)^2-1},
g_{\FS}
\right)
\]
associated with $\OO_{\PP^1\times\PP^1}(m,m)$.  In homogeneous coordinates
\[
([Z_0:Z_1],[W_0:W_1])
\in
\PP^1\times\PP^1,
\]
it is given by
\[
\operatorname{SV}_{m,m}
\bigl([Z_0:Z_1],[W_0:W_1]\bigr)
=
\left[
\sqrt{\binom{m}{a}\binom{m}{b}}\,
Z_0^{m-a}Z_1^a
W_0^{m-b}W_1^b
\right]_{0\le a,b\le m}.
\]
The binomial normalization gives
\[
\begin{aligned}
&\sum_{a,b=0}^m
\binom{m}{a}\binom{m}{b}
|Z_0|^{2(m-a)}|Z_1|^{2a}
|W_0|^{2(m-b)}|W_1|^{2b}
\\
&\qquad =
\bigl(|Z_0|^2+|Z_1|^2\bigr)^m
\bigl(|W_0|^2+|W_1|^2\bigr)^m,
\end{aligned}
\]
and hence
\[
\operatorname{SV}_{m,m}^*g_{\FS}
=
m(g_{\FS}\oplus g_{\FS}).
\]
In particular, if $\varphi$ is linearly full, then
\[
N=(m+1)^2-1.
\]
\end{enumerate}
\end{theorem}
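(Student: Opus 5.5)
We argue in four steps, following the outline announced in the abstract.

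\textbf{Step 1: reduction to polarized del Pezzo surfaces.} By Hulin's theorem \cite{Hulin2000}, the Einstein constant is positive, so $X$ is a smooth del Pezzo surface. The Kähler form $\omega$ satisfies $\omega=\varphi^*\omega_{\FS}$, so its class is $c_1(L)$ with $L=\varphi^*\OO(1)$. Since $\Ric(\omega)=\lambda\omega$, we get $c_1(X)=\lambda c_1(L)$. Because $\operatorname{Pic}(X)$ is torsion free, this gives $L=\OO(ak)$ with $-K_X=\OO(bk)$ for a primitive class $k$ and coprime positive integers $a,b$. This is the "common anticanonical root" construction.

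The immersion is then given by a subspace $V\subset H^0(X,L)$ together with a Hermitian inner product. The isometry condition says that
\[
\sum_j |s_j|^2=h^{-1},
\]
where $\{s_j\}$ is an orthonormal basis of $V$ and $h$ is the Hermitian metric on $L$ with curvature $\omega$. In other words, the Bergman-type density of $(V,h)$ is constant.

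\textbf{Step 2: Calabi rigidity and the homogeneous cases.} By Calabi's diastasis rigidity, $\varphi$ is determined up to unitary congruence by $g$. By Matsushima's theorem and Bando--Mabuchi uniqueness, the Kähler--Einstein metric in the class $c_1(L)$ is unique up to $\Aut^0(X)$ wherever it exists. For $X=\PP^2$ or $\PP^1\times\PP^1$, the only Kähler--Einstein metrics are $m g_{\FS}$ and $m(g_{\FS}\oplus g_{\FS})$, with $m$ forced to be a positive integer by integrality of $L$. Here $m=a$ when $k$ is the hyperplane class, respectively $\OO(1,1)$. These metrics are realized by $\nu_m$ and $\operatorname{SV}_{m,m}$, which gives (i) and (ii). The blow-ups of $\PP^2$ at one or two points carry no Kähler--Einstein metric, by the Matsushima obstruction, and are excluded.

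\textbf{Step 3: excluding del Pezzo surfaces of degree $d\le 6$.} It remains to treat degree $d=K_X^2\le6$. The degree-$6$ case is toric, so I would use the Futaki/Di Scala--Sombra-type toric argument, or a direct moment computation. For $d\le5$, $-K_X$ is primitive, i.e. the Fano index is one, so $L=aK_X^{-1}$ with $a=m$. The plan is to differentiate the identity $\sum|s_j|^2h=1$ at a point.

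\begin{itemize}
\item The Gram matrices of the jets $(s_j,\partial s_j,\partial^2 s_j,\dots)$ are forced to be those of the model flat Bergman kernel, via the Gauss equation relating the second fundamental forms to the curvature of $g$.
\item Thus the rank of the $k$-th osculating spaces is dictated by the Einstein curvature tensor.
\end{itemize}

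This should contradict the dimension count $h^0(mK_X^{-1})=1+\tfrac{m(m+1)}2 d$ against the ranks forced by the Gram--Gauss estimates. A weighted Riemann--Roch comparison should give the needed inequality for $d\le4$.

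\textbf{Step 4: degree five, the main obstacle.} I expect degree $5$ to be the hardest case, since there the counting is borderline. For it I would use the finite automorphism group $S_5$, acting isometrically by uniqueness of the Kähler--Einstein metric. Then $\varphi$ is $S_5$-equivariant and $V$ decomposes into $S_5$-representations. At an $S_5$-fixed configuration, for instance points with large stabilizers such as $S_4$ or the dihedral stabilizers, the holomorphic sectional curvature of $g$ is computed two ways: through the Gauss equation (bounded via the second fundamental form), and through the representation-theoretic constraints on the jets. The goal is to show that these two computations are incompatible. This would conclude the classification and hence the homogeneity.
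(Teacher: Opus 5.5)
Your Steps 1--2 and your treatment of degrees $6,7,8$ are in line with the paper. These use Hulin positivity, the Ricci-class identity, Bando--Mabuchi uniqueness plus Calabi rigidity for $\PP^2$ and $\PP^1\times\PP^1$, nonexistence of K\"ahler--Einstein metrics on $\operatorname{Bl}_1\PP^2$ and $\operatorname{Bl}_2\PP^2$, and a toric rigidity result such as Arezzo--Loi--Zuddas in degree six. The Futaki invariant is useless there, since $\operatorname{Bl}_3\PP^2$ is K\"ahler--Einstein. There is also a small slip: with $k$ primitive, $a$ and $b$ need not be coprime (take $\PP^2$ with $L=\OO(3)$).

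The genuine gap is the index-one branch in degrees $1$ through $5$, which is the real content of the theorem. What you call the ``common anticanonical root'' is only the class-level fact $c_1(L)\in\QQ_{>0}\,c_1(-K_X)$, which is automatic. The paper's root is a statement about polarized Hermitian kernels on $X\times\bar X$. Write $A=-K_X$ and $H=qA$. The Pl\"ucker Gauss map satisfies $\Gamma^*\OO(1)\simeq K_X\otimes H^3\simeq A^{3q-1}$ and $\Gamma^*g_{\FS}=(3-\lambda/2)g=\frac{3q-1}{q}g$. Hence its kernel $G$ and the projective kernel $F$ satisfy $G^q=cF^{3q-1}$. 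Since $\gcd(q,3q-1)=1$, a divisor argument yields a Hermitian section $R$ of $A\boxtimes\bar A$ with positive diagonal and $F=aR^q$. This turns a problem at an unknown level $q$ into finite-dimensional data inside $H^0(X,-K_X)$, of dimension $d+1$, with three consequences:
\begin{itemize}
\item $\varphi$ factors through $\psi_U$ (where $U$ is the support of $R$) followed by a Veronese map, so $-K_X$ is very ample, which excludes $d=1,2$.
\item The Wronskian kernel of $U$ on $2A$ equals $cR^2$, giving $2h-\binom{r+1}{2}\le\rank(R^2)\le\binom{r}{3}$, which fails for $d=3,4$.
\item For $d=5$, irreducibility of the six-dimensional $S_5$-module $H^0(-K_{Y_5})$ forces $R$ to be the unique invariant Fubini--Study kernel, so a single explicit curvature computation finishes the case.
\end{itemize}

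Without this reduction, your Steps 3--4 have no working mechanism. The Einstein condition constrains only the Ricci trace, not the full curvature or its covariant derivatives. So it does not force the Gram matrices of higher jets to match any model. The only jet quantity it controls is the first-jet Gram determinant, i.e.\ the Gauss/Wronskian kernel. Used at level $q$, that gives $G^q=cF^{3q-1}$, where both supports have dimension bounds that grow with $q$, and no count is contradicted. Likewise $h^0(qA)=1+\frac{q(q+1)}{2}d$ grows quadratically, while osculating spaces of order $k$ have dimension at most $\binom{k+2}{2}$, so there is nothing to compare.

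In degree five the same problem appears. $H^0(-qK_{Y_5})$ is reducible for $q\ge2$: it is already $16$-dimensional for $q=2$, while irreducible $S_5$-modules have dimension at most $6$. So $S_5$-equivariance leaves a family of invariant Hermitian forms and no finite check. Moreover, $Y_5$ has no $S_5$-fixed points and no point with stabilizer $S_4$. Isotropy acts faithfully on the $2$-dimensional tangent space, and $S_4$ has no faithful $2$-dimensional representation. What you are missing is a way to extract an anticanonical kernel from the level-$q$ kernel, and that is exactly what the Gauss-map identity together with $\gcd(q,3q-1)=1$ supplies.
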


Thus \cref{thm:main} proves Conjecture~A in complex dimension two, namely the
compact two-dimensional case of the Loi--Zedda conjecture.

The point is not merely to recover the toric classification. Once positivity
reduces the problem to del Pezzo surfaces, there remain many non-toric
K\"ahler--Einstein examples. The smooth del Pezzo surfaces are
$\PP^1\times\PP^1$ and the blow-ups of $\PP^2$ at at most eight points in
general position. Their K\"ahler--Einstein existence problem is classical:
all smooth del Pezzo surfaces admit a K\"ahler--Einstein metric except the
blow-ups of $\PP^2$ at one or two points; see Tian \cite{Tian1990} and the
later moduli-theoretic treatment \cite{OdakaSpottiSun2016}.

For the Fano-index-one branch, writing
\[
d=(-K_X)^2,
\]
the landscape relevant to the proof is therefore
\begin{center}
\begin{tabular}{cclcc}
\toprule
$d$ & blow-ups & anticanonical model & KE? & projectively induced KE?\\
\midrule
8 & $\operatorname{Bl}_{1}\PP^2$ & --- & no & no\\
7 & $\operatorname{Bl}_{2}\PP^2$ & --- & no & no\\
6 & $\operatorname{Bl}_{3}\PP^2$ & toric del Pezzo & yes & no\\
5 & $\operatorname{Bl}_{4}\PP^2$ & $Y_5\subset\PP^5$ & yes & no\\
4 & $\operatorname{Bl}_{5}\PP^2$ & two quadrics in $\PP^4$ & yes & no\\
3 & $\operatorname{Bl}_{6}\PP^2$ & cubic in $\PP^3$ & yes & no\\
2 & $\operatorname{Bl}_{7}\PP^2$ & double plane & yes & no\\
1 & $\operatorname{Bl}_{8}\PP^2$ & anticanonical pencil & yes & no\\
\bottomrule
\end{tabular}
\end{center}

The last column summarizes the index-one exclusion. The genuinely new
exclusions in this paper are the degrees $1$ through $5$. Degree $6$ is
already covered by previously known toric rigidity, while degrees $7$ and
$8$ are excluded by the classical nonexistence of K\"ahler--Einstein
metrics. In particular, the theorem rules out positive-dimensional moduli
families of non-toric K\"ahler--Einstein surfaces, including smooth cubic
surfaces and degree-four del Pezzo surfaces. This is precisely the non-toric
regime not covered by the symmetry-based classification results recalled
above.

The proof is organized in five steps. First, positivity of the Einstein
constant reduces the problem to smooth del Pezzo surfaces. Second, the Ricci-class
identity places the projective polarization on the primitive Fano ray and
reduces the problem to the three possible Fano indices. Third, the entire
index-one branch is excluded: a common anticanonical root removes degrees one
and two; a Gram--Gauss rank comparison removes degrees three and four; and the
degree-five surface is ruled out by its $S_5$-equivariant anticanonical
representation together with an explicit curvature computation. Degrees six
through eight are excluded by previously known toric rigidity and
K\"ahler--Einstein nonexistence results. Fourth, the remaining Fano indices
are identified by Kobayashi--Ochiai. Finally, Bando--Mabuchi uniqueness
identifies the metric, while projective rigidity identifies the realization
with the standard Veronese or Segre--Veronese model.

A notable feature of the argument is that the exclusion of the Fano-index-one
branch is independent of the ambient projective dimension. The codimension
enters only after the intrinsic classification, when the linearly full
realization is identified with the standard Veronese or Segre--Veronese
embedding.

\section{Preliminaries}
\label{sec:preliminaries}

\subsection{Normalization and the Ricci class}

We normalize the Fubini--Study metric so that its holomorphic sectional curvature is $4$.  Let
\[
  \varphi:(X^2,g)\longrightarrow(\PP^N,g_{\FS})
\]
be a holomorphic isometry and set
\[
  H:=\varphi^*\OO_{\PP^N}(1).
\]
We use the convention that, if $F$ is the diagonal of a positive Hermitian
kernel representing the pullback metric on $H$, then the associated K\"ahler
form is
\[
  \omega_g=\frac{i}{2}\,\partial\bar\partial\log F.
\]
The Ricci form is
\[
  \rho_g=-i\,\partial\bar\partial\log\det(g_{\alpha\bar\beta}),
\]
so that $c_1(H)=[\omega_g]/\pi$ and
$c_1(-K_X)=[\rho_g]/(2\pi)$.  Thus, if
\[
  \Ric(g)=\lambda g,
\]
then $\rho_g=\lambda\omega_g$ and
\begin{equation}\label{eq:ricci-class}
  c_1(-K_X)=\frac{\lambda}{2}\,c_1(H).
\end{equation}
In particular, intersecting \eqref{eq:ricci-class} with $H$ gives
\begin{equation}\label{eq:lambda-rational-intersection}
  \frac{\lambda}{2}
  =\frac{(-K_X)\cdot H}{H^2}\in\QQ,
\end{equation}
whenever $X$ is compact.

Let
\[
  \varphi:X^2\longrightarrow\PP^N
\]
be the given holomorphic immersion.  At each point $x\in X$, the embedded
projective tangent plane $T_x\varphi(X)\subset\PP^N$ determines a
$3$-dimensional vector subspace of $\CC^{N+1}$, namely the affine tangent
space to the cone over $\varphi(X)$ at any nonzero lift of $\varphi(x)$.
Thus the Gauss map is the holomorphic map
\[
  \gamma:X\longrightarrow\operatorname{Gr}(3,N+1),
  \qquad
  x\longmapsto \widehat T_xX,
\]
where $\widehat T_xX\subset\CC^{N+1}$ denotes this $3$-dimensional
vector subspace.  Composing $\gamma$ with the Pl\"ucker embedding
\[
  \operatorname{Gr}(3,N+1)
  \hookrightarrow
  \PP\!\left(\bigwedge^3\CC^{N+1}\right),
  \qquad
  W\longmapsto[\wedge^3 W],
\]
we obtain the projective Gauss map
\[
  \Gamma:X\longrightarrow
  \PP\!\left(\bigwedge^3\CC^{N+1}\right).
\]
The pullback of the hyperplane bundle under this map is
\begin{equation}\label{eq:gauss-line-bundle}
  \Gamma^*\OO(1)\simeq K_X\otimes H^3.
\end{equation}
The Gauss-map construction is classical.  In the Riemannian setting, a
generalized Gauss map for immersions into space forms was introduced by
Obata \cite{Obata1968}.  In the K\"ahler setting relevant here, Nishikawa
established the corresponding metric identity for K\"ahler immersions
\cite{Nishikawa1975}; see also Hulin
\cite[Proposition~3.1]{Hulin1996} for its formulation
and use in the study of projective K\"ahler--Einstein submanifolds.  With
our normalization, this identity specializes in complex dimension two to
\begin{equation}\label{eq:gauss-metric-general}
  \Gamma^*g_{\FS}=\left(3-\frac{\lambda}{2}\right)g.
\end{equation}

\subsection{Polarized Hermitian kernels}

We briefly recall the kernel language used below. If $L\to X$ is a holomorphic line bundle, we write
\[
L\boxtimes\bar L:=p_1^*L\otimes p_2^*\bar L
\]
for the external tensor product over $X\times\bar X$, where
$p_1:X\times\bar X\to X$ and $p_2:X\times\bar X\to\bar X$ are the projections.
A Hermitian section of $L\boxtimes\bar L$ can be written, after choosing a finite-dimensional subspace $U\subset H^0(X,L)$ with basis $s_1,\dots,s_r$, as
\begin{equation}\label{eq:kernel-representation}
  R(x,\bar y)=\sum_{i,j=1}^r C_{ij}s_i(x)\overline{s_j(y)},
\end{equation}
where $C=(C_{ij})$ is Hermitian.  The \emph{support} of $R$ is the smallest such subspace $U$, and the \emph{rank} of $R$ is the rank of the corresponding Hermitian form.  On the minimal support the matrix $C$ is nondegenerate.

For a full holomorphic map into projective space, the pullback of the standard Hermitian form gives such a kernel.  On the diagonal it is strictly positive, and the logarithm of the diagonal is a local Fubini--Study potential.  In what follows, a Hermitian kernel is said to have \emph{positive diagonal} if $R(x,\bar x)>0$ for all $x$.  This condition does not imply that its coefficient matrix on the minimal support is positive semidefinite; in particular, the forms used in the rank argument below may be indefinite.  Polarization is unique: an identity between real-analytic Hermitian kernels on the diagonal extends to $X\times\bar X$.

We shall repeatedly use the following elementary observation.

\begin{lemma}\label[lemma]{lem:constant-kernels}
Let $R_1$ and $R_2$ be Hermitian kernels with positive diagonal on the same holomorphic line bundle over a compact connected complex manifold.  If they induce the same K\"ahler form, then $R_1=cR_2$ for a positive constant $c$.
\end{lemma}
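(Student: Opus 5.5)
The plan is to compare the two kernels through their ratio on the diagonal, which is a global real-analytic function, and use pluriharmonicity plus compactness to show it is constant.

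\emph{Step 1: the ratio is pluriharmonic.} Since $R_1$ and $R_2$ are Hermitian sections of the same bundle $L\boxtimes\bar L$, their diagonals $R_1(x,\bar x)$ and $R_2(x,\bar x)$ transform identically under a change of local trivialization of $L$: both are multiplied by $|h|^2$ for the same nonvanishing holomorphic transition function $h$. Both diagonals are strictly positive, so
\[
u(x):=\log\frac{R_1(x,\bar x)}{R_2(x,\bar x)}
\]
is a well-defined, real-valued, real-analytic function on all of $X$. The induced K\"ahler forms are $\frac{i}{2}\partial\bar\partial\log R_k(x,\bar x)$ in any local trivialization. Their equality therefore gives $\partial\bar\partial u=0$ on $X$.

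\emph{Step 2: the ratio is constant.} A pluriharmonic function is locally the real part of a holomorphic function, so it satisfies the maximum principle. Since $X$ is compact, $u$ attains its maximum, and since $X$ is connected, $u$ is constant. (Alternatively, $u$ is plurisubharmonic on a compact manifold, hence constant.) Thus $R_1(x,\bar x)=c\,R_2(x,\bar x)$ on the diagonal with $c=e^{u}>0$.

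\emph{Step 3: extend off the diagonal.} The kernel $R_1-cR_2$ is holomorphic in $x$ and antiholomorphic in $y$, and it vanishes on the diagonal. By the uniqueness of polarization recalled just before the lemma, it vanishes on all of $X\times\bar X$. The local reason is that a function $F(x,\bar y)$ that is holomorphic in $(x,\bar y)$ and vanishes on the totally real set $\{y=x\}$ vanishes on a neighbourhood of the diagonal. The global statement then follows by the identity theorem on the connected manifold $X\times\bar X$, using that the kernels are given by the finite sums \eqref{eq:kernel-representation}.

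The only step requiring care is Step 1. One must check that no hidden twisting by a flat unitary factor or an indefinite coefficient matrix spoils the argument. It does not: the coefficient matrix never enters, only the positivity of the diagonal, which makes the logarithm well defined.
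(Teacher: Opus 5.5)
Your proof is correct and follows the same route as the paper: the logarithm of the diagonal ratio is a global real pluriharmonic function, which is constant on a compact connected manifold, and uniqueness of polarization extends $R_1=cR_2$ from the diagonal to all of $X\times\bar X$. The paper states these three steps in three lines; you add details it omits, namely the frame-independence of the ratio and the totally-real-diagonal argument for the polarization step.
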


\begin{proof}
The quotient $R_1(x,\bar x)/R_2(x,\bar x)$ is a positive smooth function whose logarithm is pluriharmonic.  On a compact connected complex manifold every real pluriharmonic function is constant.  Polarization then gives the global identity.
\end{proof}

\begin{lemma}[Coprime root extraction]\label[lemma]{lem:coprime-root}
Let $Z$ be a compact connected complex manifold, let $L\to Z$ be a holomorphic line bundle, and let
\[
 F\in H^0(Z,L^q),\qquad G\in H^0(Z,L^r),
\]
be nonzero sections with $\gcd(q,r)=1$.  If
\[
 G^q=cF^r
\]
for some $c\in\CC^*$, then there exist $R\in H^0(Z,L)$ and constants
$a,b\in\CC^*$ such that
\[
 F=aR^q,\qquad G=bR^r.
\]
If, in addition, $Z=X\times\bar X$, $L=A\boxtimes\bar A$, and $F,G$ are
Hermitian kernels with positive diagonal, then $R$ can be chosen Hermitian
with positive diagonal.
\end{lemma}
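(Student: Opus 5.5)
Since $\gcd(q,r)=1$, I will pick integers $u,v$ with $uq+vr=1$ and form the meromorphic section $R:=G^{u}F^{v}$ of $L^{uq+vr}=L$, where negative exponents are understood as quotients. The plan is to show $R$ is holomorphic, recovering $F$ and $G$ as powers of $R$ up to constants, and then to handle the Hermitian refinement separately.

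\emph{Divisors.} $Z$ is a connected complex manifold, so its local rings are UFDs and divisors of sections are well defined. From $G^q=cF^r$ we get $q\,\divisor(G)=r\,\divisor(F)$. Coprimality then forces $\divisor(F)=qD$ and $\divisor(G)=rD$ for an effective divisor $D$: at each prime divisor, $q\nu(G)=r\nu(F)$ gives $q\mid\nu(F)$. The section $R$ therefore has divisor $u\,rD+v\,qD=D\ge 0$, so $R$ is holomorphic, with $R\in H^0(Z,L)$. Now $F/R^q$ is a meromorphic function with trivial divisor, hence holomorphic and nowhere zero on the compact connected $Z$, hence a constant $a\ne0$. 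Similarly $G=bR^r$. Alternatively, $R^q$ and $F$ are sections of $L^q$ with the same divisor, so they differ by a nowhere-vanishing holomorphic function, which is constant by compactness. One has to phrase this without assuming $L$ itself has sections a priori; the construction $R=G^uF^v$ avoids that issue.

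\emph{Hermitian refinement.} Here $Z=X\times\bar X$ is compact and connected when $X$ is. Let $\tau(x,\bar y)=(y,\bar x)$ be the antiholomorphic involution defining Hermitian symmetry, with $R^*(x,\bar y):=\overline{R(y,\bar x)}$. Since $F,G$ are Hermitian, $F^*=F$ and $G^*=G$, and $R^*$ is again a holomorphic section of $A\boxtimes\bar A$ satisfying $F=\bar a (R^*)^q$. The divisor of $R^*$ is $\tau(D)$, and $qD=\divisor F=\divisor F^*=q\tau(D)$, so $R^*=\mu R$ for a constant $\mu$. Applying $*$ twice gives $|\mu|=1$. Replacing $R$ by $\theta R$ with $\theta^2=\bar\mu$ (rescaling $a,b$ accordingly) makes $R$ Hermitian, with $R(x,\bar x)$ real.

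\emph{Positive diagonal.} $F(x,\bar x)=aR(x,\bar x)^q>0$ everywhere, so $R$ has no zeros on the diagonal and $a$ is real. Since the diagonal is connected, $R(x,\bar x)$ has constant sign, and replacing $R$ by $-R$ if needed gives positive diagonal. The constants then satisfy $a,b>0$ up to the sign bookkeeping: from $F,G>0$, we get $a>0$, and $b>0$ as well. I expect the main obstacle to be the Hermitian-symmetry step, specifically justifying $R^*=\mu R$ cleanly.
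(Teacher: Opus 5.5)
Your proof is correct apart from two slips noted below, but you reach $R$ by a different route than the paper. The paper first extracts the effective divisor $D$. It then proves $\OO_Z(D)\simeq L$ abstractly, by observing that $T=\OO_Z(D)\otimes L^{-1}$ has $T^q$ and $T^r$ trivial. It picks some $R$ with $\divisor(R)=D$ and uses compactness to make $F/R^q$ and $G/R^r$ constant. Because this $R$ is only determined up to a scalar, the paper then needs the phase normalization $R^\#=\eta R$, $|\eta|=1$, and a sign choice using connectedness of the diagonal.

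Your explicit B\'ezout section $G^uF^v$ does the line-bundle triviality and the construction of $R$ in one step, and it gives more than you used.
\begin{itemize}
\item With $ur+vq=1$ one has exactly $R^q=G^{uq}F^{vq}=c^{u}F^{ur+vq}=c^uF$, and likewise $R^r=c^{-v}G$. So $a=c^{-u}$ and $b=c^{v}$, with no appeal to compactness.
\item In the Hermitian case, $\#$ is multiplicative on meromorphic sections, so $R^\#=(G^\#)^u(F^\#)^v=R$ automatically.
\item Applying $\#$ to $G^q=cF^r$ gives $c\in\mathbb{R}$, and positivity of the diagonals gives $c>0$. Hence $a,b>0$.
\item Finally, $R(x,\bar x)=G(x,\bar x)^uF(x,\bar x)^v>0$ directly.
\end{itemize}
So the phase-and-sign argument you imported in the second half is unnecessary for your own construction. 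The paper's more abstract choice of $R$ genuinely requires it.

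The two slips are as follows. First, $G^u$ is a section of $L^{ur}$ and $F^v$ of $L^{vq}$, so the B\'ezout relation must be $ur+vq=1$, not $uq+vr=1$. This is what your divisor computation $urD+vqD=D$ actually uses. Second, in the phase step $(\theta R)^\#=\bar\theta\mu R$, so with $|\theta|=1$, Hermitian symmetry of $\theta R$ requires $\theta^2=\mu$, not $\theta^2=\bar\mu$.
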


\begin{proof}
Taking divisors gives
\[
 q\,\divisor(G)=r\,\divisor(F).
\]
Since $q$ and $r$ are coprime, there is an effective divisor $D$ on $Z$ such
that
\[
 \divisor(F)=qD,\qquad \divisor(G)=rD.
\]
Set $T:=\OO_Z(D)\otimes L^{-1}$.  Then $T^q\simeq\OO_Z$ and
$T^r\simeq\OO_Z$, hence B\'ezout's identity gives $T\simeq\OO_Z$.
Thus $\OO_Z(D)\simeq L$, and one may choose
$R\in H^0(Z,L)$ with $\divisor(R)=D$.  The quotients $F/R^q$ and $G/R^r$
are nowhere-vanishing holomorphic functions on compact $Z$, hence constants.

For the final assertion, let $\#$ denote the Hermitian involution on sections
of $A\boxtimes\bar A$, namely
\[
R^\#(x,\bar y):=\overline{R(y,\bar x)}.
\]  Since $F$ and $G$ are Hermitian, $D$ is
$\#$-invariant.  Hence $R^\#$ and $R$ have the same divisor and
$R^\#=\eta R$ for some $\eta\in\CC^*$.  Since $\#$ is antilinear, applying it
again gives $|\eta|^2=1$, hence $|\eta|=1$.  Multiplying $R$ by a suitable phase makes $R^\#=R$.  Its
diagonal is then real and nowhere zero.  Since $X$ is connected, it has
constant sign; replacing $R$ by $-R$ if necessary gives
$R(x,\bar x)>0$ for all $x$.
\end{proof}

\subsection{Standard facts on del Pezzo surfaces}

We shall use the standard classification and anticanonical geometry of smooth del Pezzo surfaces; see, for example, \cite[Chapter~8]{Dolgachev2012}.  Besides $\PP^1\times\PP^1$, every smooth del Pezzo surface is the blow-up of $\PP^2$ at $r\le8$ points in general position, with degree
\[
  d=(-K_X)^2=9-r.
\]
The Fano index $r_X$, defined by
\[
  -K_X=r_XA
\]
with $A$ primitive in $\operatorname{Pic}(X)$, belongs to $\{1,2,3\}$.  The index-three case is $\PP^2$, while the index-two case is the smooth quadric
\[
  Q^2\simeq\PP^1\times\PP^1;
\]
this is the surface case of the Kobayashi--Ochiai characterization \cite{KobayashiOchiai1973}.  All remaining smooth del Pezzo surfaces have index one.

For index one, $A=-K_X$.  We shall use the following familiar facts.  If $d=1$ or $2$, the anticanonical bundle is not very ample.  If $d=3$, the anticanonical model is a smooth cubic surface in $\PP^3$.  If $d=4$, the anticanonical model is a smooth complete intersection of two quadrics in $\PP^4$.  If $d=5$, the anticanonical linear system embeds the unique degree-five del Pezzo surface in $\PP^5$.

We also use Tian's classification of K\"ahler--Einstein del Pezzo surfaces \cite{Tian1990}: among smooth del Pezzo surfaces, the index-one surfaces of degrees seven and eight do not admit K\"ahler--Einstein metrics, whereas the surfaces of degrees at most six do.  Equivalently, the only smooth del Pezzo surfaces without a K\"ahler--Einstein metric are $\operatorname{Bl}_p\PP^2$ and $\operatorname{Bl}_{p_1,p_2}\PP^2$.

\begin{lemma}[The degree-five anticanonical metric]\label[lemma]{lem:Y5-not-Einstein}
Let $Y_5$ be the del Pezzo surface of degree five.  The $S_5$-invariant
Fubini--Study metric induced by the complete anticanonical embedding
\[
   Y_5\hookrightarrow\PP^5
\]
is not K\"ahler--Einstein.
\end{lemma}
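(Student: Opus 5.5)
We set up the statement concretely and then look for a single point where the Einstein condition visibly fails.

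\emph{Model and the $S_5$-invariant metric.} We use the standard model of $Y_5$: it is the blow-up of $\PP^2$ at the four points $p_1=[1:0:0]$, $p_2=[0:1:0]$, $p_3=[0:0:1]$, $p_4=[1:1:1]$. Equivalently, it is $\overline{M}_{0,5}$, so $S_5=\Aut(Y_5)$ acts.

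The space $V=H^0(Y_5,-K_{Y_5})$ is $6$-dimensional. It is identified with the cubics through $p_1,\dots,p_4$. As an $S_5$-representation it is irreducible; in fact it is the $6$-dimensional irreducible representation, $\Lambda^2$ of the standard $4$-dimensional one, twisted by sign. Hence, by Schur's lemma, the $S_5$-invariant Hermitian form on $V$ is unique up to scale. So ``the'' $S_5$-invariant Fubini--Study metric is well defined, with potential
\[
\log F,\qquad F=\sum_i |s_i|^2,
\]
where $s_i$ is an orthonormal basis for this form. We will write down such a basis explicitly. One route is to realize the representation inside $\Lambda^2\CC^5$, using the Plücker embedding of $Y_5$ as a linear section of $\operatorname{Gr}(2,5)$. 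There the invariant form is induced from the standard Hermitian form on $\CC^5$ restricted to the sum-zero hyperplane.

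\emph{Test the Einstein equation at a special point.} If $\omega=\frac{i}{2}\partial\bar\partial\log F$ were Kähler--Einstein, \eqref{eq:ricci-class} forces $\lambda=2$, since $H=-K$. We then have $\rho_\omega=2\omega$. Equivalently,
\[
\det(\omega)\cdot F^{\,c}\cdot|\sigma|^{-2}
\]
is constant for a suitable constant $c$, where $\sigma$ is a local frame of $K^{-1}$ in the chart. This is the same as saying that the Kähler volume form equals a constant multiple of $F^{-1}$ times the anticanonical ``Euclidean'' volume. That is a Monge--Ampère identity $\det(\partial\bar\partial\log F)=\kappa F^{-1}$, written in the affine chart $(x,y)$ of $\PP^2$, where $\sigma=dx\wedge dy$ up to trivialization.

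To disprove it, it suffices to compare the ratio $\det(\partial\bar\partial\log F)\cdot F$ at two points. We choose points with large stabilizers, so that the computation collapses. Natural choices are:
\begin{enumerate}[label=(\alph*)]
\item a point with $S_3\times S_2$ or $D_5$-type stabilizer; for example, a fixed point of a $5$-cycle, at which the $5$-cycle acts on the tangent space with eigenvalues $\zeta,\zeta^{-1}$ or similar;
\item a point with stabilizer containing $S_4$-type symmetry, for example a point on an exceptional curve.
\end{enumerate}
At each such point, the symmetry forces the Hessian of $\log F$ to be scalar or diagonal in a known basis. So only two or three Taylor coefficients of $F$ are needed.

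An alternative, if cleaner, is to compute the scalar curvature $s=\operatorname{tr}_\omega\rho_\omega$ at two inequivalent symmetric points. If the metric were Einstein, $s$ would be the constant $2\cdot 2$ in our normalization, so finding $s(x_1)\neq s(x_2)$ suffices. Since the orbit structure of $S_5$ on $Y_5$ has finitely many special orbits, evaluating the Taylor expansion of $F$ to order $4$ at these points is a finite explicit computation.

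\emph{Main obstacle.} The real work is writing down the orthonormal invariant basis in the blown-up chart correctly, including the factor relating sections to cubics, and then carrying the fourth-order jet of $\log F$ through to curvature. This is where errors are likely, so the symmetric-point reduction is essential to keep it to a handful of numbers.

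A safety check is also available. The balanced condition $\sum_i|s_i|^2_h=\mathrm{const}$ holds automatically here, by irreducibility and homogeneity of the representation under the averaging. So the failure must show up genuinely at the curvature level, not at the level of the density of states. I would first try the scalar-curvature comparison between a $5$-cycle fixed point and a point on one of the ten lines. If the two values coincide, I would instead compare the full Ricci eigenvalues at the $5$-cycle fixed point with $2\omega$.
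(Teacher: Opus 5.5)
\textbf{There is a genuine gap: the lemma is a claim about one explicit metric, and the decisive computation is never carried out.}

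Your reformulation is correct, and in fact sharp. Since $H=-K_{Y_5}$ forces $\lambda=2$, the metric is K\"ahler--Einstein if and only if the global function $\det(g_{i\bar j})\,F$, computed in the frame dual to $dz\wedge dw$, is constant. But the lemma is precisely the assertion that this function is \emph{not} constant, and the proposal never shows it:
\begin{itemize}
\item You do not write the invariant form in a chart; you call this the main obstacle and leave it open.
\item You evaluate neither $\det(g_{i\bar j})F$, nor the scalar curvature, nor $\Ric$ at any point.
\item Your closing contingency (``if the two values coincide, I would instead\dots'') concedes that you do not know whether your test discriminates.
\end{itemize}
For a statement whose whole content is that a specific metric violates an identity, a criterion without an evaluation is not a proof.

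Your suggested shortcut to the invariant form also fails as stated. Let $S_5$ permute the coordinates of $\CC^5$ and let $W$ be the sum-zero hyperplane. The only $6$-dimensional invariant subspace of $\bigwedge^2\CC^5$ is then $\bigwedge^2W$. But $\PP(\bigwedge^2W)\cap\operatorname{Gr}(2,5)=\operatorname{Gr}(2,W)$, which is a fourfold, not $Y_5$.

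The paper supplies exactly the missing data:
\begin{itemize}
\item the Bauer--Catanese basis on the chart $x_3=1$;
\item an invariant Gram matrix $H_0$, checked against generators of $S_5$;
\item the kernel $G=10H_0^{-1}$;
\item a one-point test at $(z,w)=(2,0)$ showing that $\Ric_{i\bar j}$ and $g_{i\bar j}$ are not proportional (ratios $7/9$ and $67/189$). This uses second derivatives of the section vector.
\end{itemize}

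Your Monge--Amp\`ere test needs only first derivatives, at two points, and it does close the argument with these data. By my own check, not in the paper: write $N=s^*Gs$ and let $D$ be the $3\times3$ Gram determinant, so that $\det(g_{i\bar j})N=D/N^2$. This equals $9/4$ at $(2,0)$, a point on a line. At the $5$-cycle fixed point $(z,w)=(-\phi,1-\phi)$, with $\phi=\frac{1+\sqrt5}{2}$, it equals $5/2$. So your suggested pair of points works once the computation is done.

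The choice of points is not innocuous, though. On the line $w=0$ one finds
\[
D/N^2=\tfrac94+\frac{3(\operatorname{Im}z)^2}{(1+|z|^2+|1-z|^2)^2}.
\]
So the function equals $9/4$ at every real point of that line. By $S_5$-invariance and the reality of the action, it equals $9/4$ at every real point of all ten lines, and comparing two such points detects nothing. That is why the evaluation has to be carried out, not just planned.
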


\begin{proof}
The verification is an explicit curvature computation and is deferred to
Appendix~\ref{app:Y5}.
\end{proof}

\begin{proposition}\label[proposition]{prop:very-ample}
Let $X$ be a smooth del Pezzo surface of Fano index one carrying a
K\"ahler--Einstein metric $g$ induced by a holomorphic isometric immersion
$\varphi:(X,g)\to(\PP^N,g_{\FS})$ with
$H=\varphi^*\OO(1)=q(-K_X)$.  Then $-K_X$ is very ample.  In particular,
the conclusion is independent of $N$.
\end{proposition}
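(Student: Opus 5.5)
The plan is to extract a common root of the Fubini--Study kernel and the Gauss kernel. That root shows the immersion $\varphi$ factors through the anticanonical map, and the two cases where $-K_X$ is not very ample ($d=1,2$) are then excluded directly.

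\textbf{Step 1: the Einstein constant and the Gauss metric.} Since $H=q(-K_X)$, \eqref{eq:lambda-rational-intersection} gives $\lambda/2=1/q$. Then \eqref{eq:gauss-metric-general} becomes
\[
\Gamma^*g_{\FS}=\frac{3q-1}{q}\,g .
\]
Let $F$ be the pulled-back Hermitian kernel of $\varphi$ on $H\boxtimes\bar H=(-K_X)^{q}\boxtimes\overline{(-K_X)^{q}}$; by our normalization it induces $\omega_g$. Let $G$ be the pulled-back kernel of $\Gamma$. By \eqref{eq:gauss-line-bundle}, $G$ lives on $K_X\otimes H^3=(-K_X)^{3q-1}$ and induces $\frac{3q-1}{q}\omega_g$. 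Both kernels have positive diagonal.

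\textbf{Step 2: the common root.} The kernels $G^{q}$ and $F^{3q-1}$ are both Hermitian kernels with positive diagonal on $(-K_X)^{q(3q-1)}\boxtimes\overline{(\cdot)}$, and both induce $(3q-1)\omega_g$. \Cref{lem:constant-kernels} therefore gives $G^q=cF^{3q-1}$. Since $\gcd(q,3q-1)=1$, \cref{lem:coprime-root} applies on $Z=X\times\bar X$ with $L=(-K_X)\boxtimes\overline{(-K_X)}$. It produces a Hermitian kernel $R$ on $-K_X\boxtimes\overline{-K_X}$ with positive diagonal and $F=aR^q$. Write
\[
R=\sum C_{ij}s_i\bar s_j
\]
with $s_i$ a basis of its support $U\subset H^0(X,-K_X)$.

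\textbf{Step 3: factorization through the anticanonical map.} Expanding $R^q$ shows that the support of $F$ lies in the image of $\Sym^qU\to H^0(X,H)$. The support of $F$ is exactly the span of the components of $\varphi$, since $\varphi$ may be taken linearly full. So every component of $\varphi$ is a degree-$q$ polynomial in $s_1,\dots,s_r$. Consequently:
\begin{enumerate}[label=\textup{(\alph*)}]
\item If all $s_i$ vanish at some $x$, then all components of $\varphi$ vanish at $x$, which is impossible. Hence $|U|$, and a fortiori $|-K_X|$, is base-point free.
\item The map $\varphi$ factors as $\varphi=\ell\circ\nu_q\circ\psi$, where $\psi:X\to\PP(U^*)$ is the map given by $U$, $\nu_q$ is the Veronese map, and $\ell$ is a linear projection. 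Since $\varphi$ is an immersion, $d\psi$ is injective everywhere.
\end{enumerate}

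\textbf{Step 4: case check.} For $d\ge3$ the standard facts already give very ampleness, so only $d=1,2$ remain.
\begin{itemize}
\item If $d=1$, then $|-K_X|$ is a pencil with a base point, contradicting (a).
\item If $d=2$, then $h^0(-K_X)=3$. The map $\psi$ is either the anticanonical double cover $X\to\PP^2$ (when $U=H^0(-K_X)$) or a map to a line (when $\dim U\le2$). A map to a line has rank at most one and so cannot be an immersion of a surface. The double cover ramifies along a smooth quartic curve, where $d\psi$ drops rank. Either way this contradicts (b).
\end{itemize}
Hence $d\ge3$ and $-K_X$ is very ample. No step uses $N$, so the conclusion is independent of $N$.

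\textbf{Main obstacle.} The delicate step is Step 2. One must check that the Gauss kernel $G$ really has positive diagonal on the stated bundle with the stated normalization, so that \cref{lem:constant-kernels} applies to $G^q$ and $F^{3q-1}$. Once that holds, the rest is formal.
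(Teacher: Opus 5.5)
Your proof is correct. Steps 1--3 match the paper's proof: $\lambda=2/q$, the relation $G^q=cF^{3q-1}$, the coprime anticanonical root $R$, and the factorization $\varphi=\ell\circ\nu_q\circ\psi_U$. You depart from the paper only at the end.

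\textbf{The paper's route.} Before building the root, the paper invokes Hulin's no-double-points theorem, so $\varphi$ is injective and hence an embedding. The factorization then forces $\psi_U$ to separate both points and tangent vectors. So $\psi_U$ is itself an embedding, and very ampleness of $-K_X$ follows uniformly, with no case analysis.

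\textbf{Your route.} You use only that $\varphi$ is an immersion, which yields that $\psi_U$ is a base-point-free immersion. You then finish with the del Pezzo classification:
\begin{itemize}
\item $d\ge3$ is automatic;
\item $d=1$ is ruled out by the base point of the anticanonical pencil;
\item $d=2$ is ruled out because the anticanonical double cover ramifies along a quartic, or else $\psi_U$ lands in a line.
\end{itemize}

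\textbf{What each approach buys.} Your version is independent of Hulin's injectivity theorem. The cost is that it relies on the specific low-degree anticanonical geometry. In effect it proves the exclusion of degrees one and two (the paper's Step 3a) directly, rather than a statement about $\psi_U$.

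The paper's route gives a stronger byproduct: the minimal support $U$ of the root itself embeds $X$. This is reused in the degree-three and degree-four exclusions to pin down $r=\dim U$. With only your immersion statement, degree three still goes through, since an immersion $X\to\PP^2$ would be an unramified covering of a simply connected surface. Degree four would not: there the paper applies adjunction to the image of $\psi_U$ as a smooth quartic in $\PP^3$, so it would need further input such as injectivity.
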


\begin{proof}
Set $A:=-K_X$.  Since $X$ is compact, Hulin's no-double-points theorem
\cite[main theorem, p.~278]{Hulin1996} shows that $\varphi$ is injective; hence
it is a holomorphic embedding.  Replacing the ambient projective space by the
projective span of the image, we may assume that $\varphi$ is linearly full.
The Ricci-class identity and $H=qA$ give
\[
c_1(A)=\frac{\lambda}{2}c_1(H)
      =\frac{\lambda q}{2}c_1(A),
\]
and therefore $\lambda=2/q$.

Let $F$ be the polarized Hermitian kernel of $\varphi$.  Since $H=qA$,
\[
   F\in H^0(X\times\bar X,A^q\boxtimes\bar A^{\,q}).
\]
Let $G$ be the polarized Hermitian kernel of the Pl\"ucker projective Gauss
map.  The classical Gauss-map metric identity recalled above gives
\[
   \Gamma^*g_{\FS}
   =\left(3-\frac{\lambda}{2}\right)g
   =\left(3-\frac1q\right)g
   =\frac{3q-1}{q}\,g.
\]
Equivalently,
\[
   \Gamma^*\OO(1)
   \simeq K_X\otimes H^3
   \simeq A^{-1}\otimes A^{3q}
   \simeq A^{3q-1},
\]
so
\[
   G\in H^0(X\times\bar X,
             A^{3q-1}\boxtimes\bar A^{\,3q-1}).
\]
The kernels $G^q$ and $F^{3q-1}$ are Hermitian sections of the same line
bundle
\[
   A^{q(3q-1)}\boxtimes\bar A^{\,q(3q-1)},
\]
and induce the same K\"ahler form.  By \cref{lem:constant-kernels}, there exists
$c>0$ such that
\[
   G^q=cF^{3q-1}.
\]

Put $Z=X\times\bar X$ and $L=A\boxtimes\bar A$.  Since
$\gcd(q,3q-1)=1$, \cref{lem:coprime-root} applied to
$G^q=cF^{3q-1}$ gives a Hermitian kernel
\[
   R\in H^0(X\times\bar X,A\boxtimes\bar A)
\]
with positive diagonal and constants $a,b>0$ such that
\[
   F=aR^q,
   \qquad
   G=bR^{3q-1}.
\]
This is the common anticanonical root of the projective and Gauss kernels.

\begin{remark}[Diastatic interpretation of the common root]
The common-root construction also has a natural interpretation in
Calabi's diastatic language.  Fix $p\in X$.  For a Hermitian kernel $K$
with positive diagonal, write
\[
   D_p^K(x)
   :=
   \log\frac{K(x,\bar x)K(p,\bar p)}{|K(x,\bar p)|^2}
\]
on its natural domain $K(x,\bar p)\ne0$.  On the common domain of the
pullback Fubini--Study diastases of $\varphi$ and of the projective
Gauss map $\Gamma$, the Gauss metric identity gives
\[
   qD_p^{\Gamma}=(3q-1)D_p^{\varphi}.
\]
The identities $F=aR^q$ and $G=bR^{3q-1}$ give at once
\[
   D_p^R
   =\frac1q D_p^{\varphi}
   =\frac1{3q-1}D_p^{\Gamma}.
\]
The additive constants cancel in the diastasis.  What the coprimality
$\gcd(q,3q-1)=1$ contributes is not this identity but the existence of $R$
itself as a global section of $A\boxtimes\bar A$, rather than merely a local
potential; it is that global, finite-rank object which the exclusions in
degrees three and four use.
Thus the anticanonical root may equivalently be viewed as a primitive
anticanonical diastasis.  We shall use the polarized kernel $R$ rather
than the diastasis in what follows, since the exclusions in degrees
three and four require the finite-rank Hermitian information carried
by its coefficient form.
\end{remark}

Let
\[
   U\subset H^0(X,A)
\]
be the minimal support of $R$.  Choose a basis of $U$ and write, with the first variable holomorphic and
the second antiholomorphic,
\[
   R(x,\bar y)=f(y)^*C f(x),
   \qquad
   R(x,\bar x)=f(x)^*C f(x),
\]
with $C$ nondegenerate Hermitian.  We stress that positivity of the diagonal
does not imply that $C$ is positive definite; the coefficient form may be
indefinite.  This is why the rank estimate below uses only nondegeneracy and
Sylvester's rank inequality.  Positivity on the diagonal implies that
$f(x)\ne0$ for every $x$, so $U$ is base-point-free.  Consider the multiplication map
\[
   \mu_q:\Sym^qU\longrightarrow H^0(X,qA).
\]
The nondegenerate Hermitian form $C$ on $U$ induces a Hermitian form on
$\Sym^qU$, and the Hermitian kernel $R^q$ is obtained from this form after
pushing it forward by $\mu_q$.  Consequently the minimal support of $R^q$ is
contained in $\operatorname{Im}\mu_q$.  Since $F=aR^q$, the minimal support of
$F$ is the same.  Because $\varphi$ is linearly full, that support is exactly
the span of the homogeneous coordinate sections defining $\varphi$.
Therefore every projective coordinate of $\varphi$ belongs to
$\operatorname{Im}\mu_q$ and is represented by a degree-$q$ homogeneous
polynomial in the coordinates defined by $U$.  Consequently, along $X$ the
map $\varphi$ factors projectively as
\[
   X\xrightarrow{\ \psi_U\ }\PP(U^*)
   \xrightarrow{\ \nu_q\ }\PP(\Sym^qU^*)
   \dashrightarrow\PP^N,
\]
where $\nu_q$ is the $q$-th Veronese embedding and the last arrow is the
projective linear map determined by the homogeneous coordinate sections of
$\varphi$.  Its center is disjoint from
$\nu_q(\psi_U(X))$, because $\varphi$ is everywhere defined, so this is an
honest holomorphic factorization on $\psi_U(X)$.

It follows immediately that
\[
   \psi_U(x)=\psi_U(y)\Longrightarrow \varphi(x)=\varphi(y),
   \qquad
   d\psi_U(v)=0\Longrightarrow d\varphi(v)=0.
\]
Since $\varphi$ is an embedding, $\psi_U$ separates points and tangent
vectors.  Hence
\[
   \psi_U:X\hookrightarrow\PP(U^*)
\]
is an embedding.  In particular,
\[
   A=-K_X
\]
is very ample.
\end{proof}

\section{Proof of the main theorem}
\label{sec:proof-main}

We now prove \cref{thm:main} in a single sequence of steps.  The proof is
organized so that the standard algebraic-geometric input and the genuinely
metric restrictions imposed by projective inducibility remain clearly
separated.

\begin{proof}[Proof of \cref{thm:main}]
Since $X$ is compact, it is complete.  By Hulin's no-double-points theorem
\cite[main theorem, p.~278]{Hulin1996}, the given K\"ahler immersion
$\varphi$ is injective; since
it is an immersion from a compact manifold into a Hausdorff manifold, it is a
holomorphic embedding.
We shall use both point separation and tangent-vector separation by $\varphi$
throughout the proof.

\medskip
\noindent
\textbf{Step 1: Fano reduction.}
By Hulin's positivity theorem \cite[Theorem]{Hulin2000}, the Einstein constant is strictly positive:
\[
   \lambda>0.
\]
Since
\[
   c_1(-K_X)=\frac{\lambda}{2}\,c_1(H),
   \qquad H:=\varphi^*\OO_{\PP^N}(1),
\]
the anticanonical bundle is positive, hence ample.  Therefore $X$ is a smooth
del Pezzo surface.

\medskip
\noindent
\textbf{Step 2: Reduction to the three Fano indices.}
Write
\[
   -K_X=r_XA,
\]
with $A$ primitive and ample.  A smooth del Pezzo surface is rational, hence
$H^1(X,\OO_X)=0$ and $\operatorname{Pic}^0(X)=0$.  Thus the first Chern class
map
\[
   c_1:\operatorname{Pic}(X)\longrightarrow H^2(X,\mathbb Z)
\]
is injective and identifies $\operatorname{Pic}(X)$ with the lattice
$c_1(\operatorname{Pic}(X))$.  Since $A$ is primitive in
$\operatorname{Pic}(X)$, the class $c_1(A)$ is primitive in this lattice.
The Ricci-class identity places $c_1(H)$ on the positive ray spanned by
$c_1(A)$; hence
\[
   c_1(H)=m c_1(A)
\]
for a unique integer $m\ge1$.  Injectivity of $c_1$ then gives
\[
   H=mA.
\]
Therefore
\[
   \frac{\lambda}{2}=\frac{r_X}{m}.
\]
For a smooth del Pezzo surface $r_X\in\{1,2,3\}$.  It remains to exclude the
index-one branch and then identify the cases $r_X=2,3$.

\medskip
\noindent
\textbf{Step 3: Exclusion of Fano index one.}
Assume
\[
   r_X=1.
\]
Then $A=-K_X$.  To avoid confusion with the polarization integer $m$
used in the statement of the theorem, we denote the index-one polarization
multiple by $q$; thus
\[
   H=qA,
   \qquad q\in\mathbb Z_{>0},
\]
and
\[
   \lambda=\frac{2}{q}.
\]
After replacing the ambient projective space by the projective span of the
image, we may assume that $\varphi$ is linearly full.

\smallskip
\noindent
\emph{Step 3a: degrees one and two.}
By \cref{prop:very-ample}, the anticanonical bundle $A=-K_X$ is very ample.
For a del Pezzo surface of degree $1$ or $2$, however, the anticanonical
bundle is not very ample.  Hence degrees $1$ and $2$ cannot occur.

\smallskip
\noindent
\emph{Step 3b: degrees three and four.}
For the remaining degrees, retain the common-root construction from the proof
of \cref{prop:very-ample}.  Thus the polarized projective kernel $F$ admits a
Hermitian anticanonical root
\[
   R\in H^0(X\times\bar X,A\boxtimes\bar A),
   \qquad F=aR^q,
\]
with positive diagonal.  Let $U\subset H^0(X,A)$ be its minimal support; as
shown in that proof, the associated map
\[
   \psi_U:X\hookrightarrow\PP(U^*)
\]
is an embedding.

Set
\[
   g_A:=q^{-1}g.
\]
Since $F=aR^q$, the root kernel induces precisely this rescaled metric:
\[
   \omega_A:=\omega_{g_A}
   =\frac{i}{2}\partial\bar\partial\log R.
\]
Since constant rescaling does not change the Ricci tensor,
\[
   \Ric(g_A)=\Ric(g)=\frac2q g=2g_A.
\]
Locally choose a holomorphic frame of $A$ and write
\[
   R(z,\bar z)=f(z)^*Cf(z),
\]
where $C$ is nondegenerate on $U$.  Put
\[
   r:=\dim U,
   \qquad
   f_0=f,
   \qquad
   f_i=\frac{\partial f}{\partial z_i},\ i=1,2.
\]
With the convention fixed in \cref{sec:preliminaries}, the local matrix of
$g_A$ is
\[
   (g_A)_{i\bar j}=\frac{\partial^2}{\partial z_i\partial\bar z_j}\log R.
\]
The Schur-complement identity therefore gives the exact diagonal identity
\begin{equation}\label{eq:gram-schur}
   \det\bigl(f_a^*Cf_b\bigr)_{0\le a,b\le2}
   =R^3\det\bigl((g_A)_{i\bar j}\bigr).
\end{equation}
The global meaning of this determinant is most transparent through first
jets.  The first-jet sequence
\[
   0\longrightarrow\Omega_X^1\otimes A
   \longrightarrow J^1A
   \longrightarrow A
   \longrightarrow0
\]
has rank $3$, and the evaluation of first jets gives a canonical map
\[
   U\otimes\OO_X\longrightarrow J^1A.
\]
Taking determinants yields the Wronskian map
\[
   \mathcal J_U:\bigwedge\nolimits^3U
   \longrightarrow H^0\!\left(X,\det J^1A\right)
   =H^0\!\left(X,A^3\otimes K_X\right)
   =H^0(X,2A),
\]
where the last equality uses $K_X=-A$.  In a local frame this map is
represented exactly by $f\wedge f_1\wedge f_2$.

The nondegenerate Hermitian form $C$ on $U$ induces a Hermitian form on
$\bigwedge^3U$.  Pushing it forward through $\mathcal J_U$ defines a global
Hermitian kernel $W$ on $2A$; its diagonal is the determinant on the
left-hand side of \eqref{eq:gram-schur}.  Consequently its support is
contained in $\operatorname{Im}\mathcal J_U$, and hence
\[
   \rank(W)\le\dim\operatorname{Im}\mathcal J_U
   \le\dim\bigwedge\nolimits^3U=\binom r3.
\]
On the diagonal, \eqref{eq:gram-schur} is positive because $R>0$ and $g_A$ is
positive definite.  Its induced K\"ahler form is
\begin{align*}
   \omega_W
   &=\frac{i}{2}\partial\bar\partial\log W\\
   &=3\omega_A-\frac12\rho_{g_A}.
\end{align*}
Since $\Ric(g_A)=2g_A$, equivalently $\rho_{g_A}=2\omega_A$, we obtain
\[
   \omega_W=2\omega_A.
\]
But $R^2$ is a Hermitian kernel with positive diagonal on $2A$ and induces the same form
$2\omega_A$.  By \cref{lem:constant-kernels},
\[
   W=cR^2
\]
for some $c>0$.  Consequently
\[
   \rank(R^2)=\rank(W)\le\binom r3.
\]

For the opposite inequality, let
\[
   S:=\dim\Sym^2U=\binom{r+1}{2}
\]
and let
\[
   \mu:\Sym^2U\longrightarrow H^0(X,2A)
\]
be multiplication.  Write
\[
   h:=\rank\mu.
\]
The nondegenerate Hermitian form defined by $R$ induces a nondegenerate
Hermitian form on $E:=\Sym^2U$, whose dimension is $S$.  Choose a basis of
$\operatorname{Im}\mu$ and let $M$ be the $h\times S$ matrix of the
surjection $E\to\operatorname{Im}\mu$.  If $B$ is the nonsingular Hermitian
matrix induced by $R^2$ on $E$, then the coefficient matrix of the
push-forward kernel on $\operatorname{Im}\mu$ is $MBM^*$.  Since
$\rank M=h$ and $B$ is invertible, Sylvester's rank inequality yields
\[
   \rank(R^2)=\rank(MBM^*)
   \ge \rank(MB)+\rank(M^*)-S
   =2h-S.
\]
Thus
\[
   2h-\binom{r+1}{2}
   \le \rank(R^2)
   \le \binom r3.
\]

For $d=3$, the standard anticanonical model is a smooth cubic surface in
$\PP^3$ \cite[Chapter~8]{Dolgachev2012}.  Since $h^0(X,A)=4$ and $U$ embeds
$X$, one has $r=4$.  Indeed $r\ge3$, since $\psi_U$ embeds the surface $X$ in
$\PP(U^*)=\PP^{r-1}$; and $r\ne3$, for if $r=3$, then
$\psi_U:X\to\PP^2$ would be a closed embedding of a projective surface,
hence its image would be all of $\PP^2$ and $X\simeq\PP^2$, contradicting
$r_X=1$.  Since also $r\le h^0(X,A)=4$, it follows that $r=4$.  The cubic
is contained in no quadric, hence
\[
   S=\binom52=10,
   \qquad h=10,
\]
and therefore
\[
   10\le\rank(R^2)\le\binom43=4,
\]
a contradiction.

For $d=4$, the anticanonical model is a smooth complete intersection of two
quadrics in $\PP^4$ \cite[Chapter~8]{Dolgachev2012}.  Here $h^0(X,A)=5$ and
$r\ge4$ by the preceding argument.  In fact $r=5$: if $r=4$, the embedding
defined by $U$ would realize $X$ as a surface
in $\PP^3$.  Since the embedding is defined by $A$ and $A^2=4$, its image
has degree four; being a surface in $\PP^3$, it is therefore a quartic
hypersurface.  Its canonical bundle is trivial by adjunction, contrary to
$K_X=-A$.  Thus
\[
   S=\binom62=15,
   \qquad h=15-2=13,
\]
and
\[
   11=2\cdot13-15
   \le\rank(R^2)
   \le\binom53=10,
\]
again a contradiction.

\smallskip
\noindent
\emph{Step 3c: degree five.}
Let
\[
   Y_5\simeq\operatorname{Bl}_{p_1,\dots,p_4}\PP^2
\]
be the degree-five del Pezzo surface.  It is unique up to isomorphism, and
\[
   \Aut(Y_5)\simeq S_5.
\]
Moreover, the anticanonical section space
$H^0(Y_5,-K_{Y_5})$ is an irreducible six-dimensional complex representation
of $S_5$ \cite[Theorem~2.6]{BauerCatanese2021}.

Assume that a projectively induced K\"ahler--Einstein metric exists with
polarization $q(-K_{Y_5})$.  The rescaled metric
\[
   g_A:=q^{-1}g
\]
is K\"ahler--Einstein in the anticanonical class.  For every
$\sigma\in\Aut(Y_5)$, the pullback $\sigma^*g_A$ is another
K\"ahler--Einstein metric in the same class.  To place the uniqueness theorem
in its standard anticanonical normalization, set
\[
   \widehat g_A:=2g_A.
\]
Then
\[
   [\omega_{\widehat g_A}]=2\pi c_1(-K_{Y_5}),
   \qquad
   \rho_{\widehat g_A}=\omega_{\widehat g_A}.
\]
Since $\Aut^0(Y_5)$ is trivial, Bando--Mabuchi uniqueness
\cite{BandoMabuchi1987}, applied to $\widehat g_A$ and
$\sigma^*\widehat g_A$, gives
\[
   \sigma^*\widehat g_A=\widehat g_A.
\]
Dividing by $2$ yields
\[
   \sigma^*g_A=g_A
\]
for every $\sigma\in S_5$.

Let $R$ be the anticanonical root constructed above.  We use the
canonical $\Aut(Y_5)$-linearization of $-K_{Y_5}$, and hence the induced
actions on
\[
V:=H^0(Y_5,-K_{Y_5})
\qquad\text{and}\qquad
V^*.
\]
For every $\sigma\in\Aut(Y_5)$, the kernels $\sigma^*R$ and $R$ define
Hermitian metrics on $-K_{Y_5}$ with the same curvature, because
$\sigma^*g_A=g_A$.  By \cref{lem:constant-kernels},
\[
\sigma^*R=c_\sigma R
\]
for some $c_\sigma>0$.  Since every $\sigma\in S_5$ has finite order,
$c_\sigma=1$.  Thus $R$ is $S_5$-invariant.

Let
\[
U\subset V
\]
be the minimal support of $R$.  The invariance of $R$ implies that $U$ is an
$S_5$-subrepresentation of $V$.  Since $R\neq0$, one has $U\neq0$, and since
$V$ is an irreducible six-dimensional $S_5$-representation, it follows that
\[
U=V.
\]

We now make explicit the Hermitian-space convention.  Choose a basis
\[
\mathcal B=(s_1,\ldots,s_6)
\]
of $V$.  In a local frame of $-K_{Y_5}$, write
\[
f(x)=
\begin{pmatrix}
s_1(x)\\
\vdots\\
s_6(x)
\end{pmatrix}.
\]
The vector $f(x)$ is naturally the coordinate vector, in the dual basis
$\mathcal B^*$, of the evaluation functional
\[
\operatorname{ev}_x\in V^*.
\]
Accordingly, a polarized kernel
\[
R(x,\bar y)=f(y)^*C f(x)
\]
is intrinsically determined by a nondegenerate Hermitian form on $V^*$; the
matrix $C$ is its matrix in the dual basis $\mathcal B^*$.

Since $R$ is $S_5$-invariant, this Hermitian form on $V^*$ is
$S_5$-invariant.  The dual representation $V^*$ is irreducible, and hence, by
Schur's lemma, the coefficient form of $R$ is a real scalar multiple of the
unique $S_5$-invariant positive Hermitian form on $V^*$.  The scalar is
nonzero because the minimal support of $R$ is all of $V$, and it is positive
because
\[
R(x,\bar x)>0
\qquad\text{for every }x\in Y_5.
\]
Thus, up to an irrelevant positive constant, $R$ is the Fubini--Study kernel
associated with the unique $S_5$-invariant Hermitian structure on the complete
anticanonical system.

Equivalently, let $\langle\cdot,\cdot\rangle_0$ denote the unique
$S_5$-invariant positive Hermitian inner product on the section space $V$, up
to a positive scalar, and let $H_0$ be its Gram matrix in the basis
$\mathcal B$.  The induced dual Hermitian form on $V^*$ has matrix
\[
H_0^{-1}
\]
in the dual basis $\mathcal B^*$.  Therefore the coefficient matrix of the
corresponding Fubini--Study kernel is, up to a positive scalar,
\[
C\sim H_0^{-1}.
\]
This is precisely the convention used in Appendix~\ref{app:Y5}, where the
$S_5$-invariant Hermitian product on $V$ is represented by $H_0$ and the
associated Fubini--Study kernel is represented, up to scale, by
\[
G=10H_0^{-1}.
\]

Consequently, the metric $g_A$ induced by the primitive root $R$ is exactly
the distinguished $S_5$-invariant Fubini--Study metric associated with the
complete anticanonical embedding
\[
Y_5\hookrightarrow\PP(V^*)\simeq\PP^5.
\]
By \cref{lem:Y5-not-Einstein}, this metric is not K\"ahler--Einstein.  This
contradicts the construction of $g_A$ and therefore excludes the degree-five
case.

\smallskip
\noindent
\emph{Step 3d: degrees six, seven and eight.}
The degree-six del Pezzo surface is the toric blow-up of $\PP^2$ at three
non-collinear points.  By the toric rigidity theorem of Arezzo--Loi--Zuddas
\cite[Proposition~4.2]{ArezzoLoiZuddas2012}, a smooth compact toric manifold of
complex dimension at most four carrying a projectively induced K\"ahler--Einstein metric must be
a projective space or a product of projective spaces.  The degree-six del
Pezzo surface is neither, so it is excluded.  Alternatively, since the
degree-six del Pezzo surface is toric, it is excluded by the toric
classifications of Manno and Salis
\cite{MannoSalis2022,MannoSalis2026}.

The index-one del Pezzo surfaces of degrees seven and eight are respectively
$\operatorname{Bl}_{p_1,p_2}\PP^2$ and $\operatorname{Bl}_p\PP^2$.
By Tian's classification they do not admit K\"ahler--Einstein metrics, hence
they are excluded a fortiori.

We have now excluded every index-one del Pezzo surface.  Therefore
\[
   r_X\ne1.
\]

\medskip
\noindent
\textbf{Step 4: The remaining Fano indices.}
We now return to the polarization integer $m$ of Step~2; the symbol $q$ was
used only within the excluded index-one branch.  Only
\[
   r_X=2\quad\text{or}\quad r_X=3
\]
remain.  By the Kobayashi--Ochiai characterization, if $r_X=3$ then
\[
   X\simeq\PP^2,
   \qquad
   A=\OO_{\PP^2}(1),
\]
whereas if $r_X=2$ then
\[
   X\simeq Q^2\simeq\PP^1\times\PP^1,
   \qquad
   A=\OO(1,1).
\]
Thus the underlying complex surface is already one of the two homogeneous
models.

\medskip
\noindent
\textbf{Step 5: Identification of the metric and the projective realization.}
Suppose first that $X\simeq\PP^2$.  Since $H=mA$,
\[
   H=\OO_{\PP^2}(m).
\]
Let
\[
   g_0:=m g_{\FS}.
\]
Both $g$ and $g_0$ have Einstein constant $6/m$.  Rescale them to
\[
   \widetilde g:=\frac6m\,g,
   \qquad
   \widetilde g_0:=\frac6m\,g_0=6g_{\FS}.
\]
Then both lie in $2\pi c_1(-K_{\PP^2})$ and satisfy
$\rho=\omega$.  Bando--Mabuchi uniqueness \cite{BandoMabuchi1987} gives an
automorphism $F$ of $\PP^2$ such that
\[
   F^*\widetilde g_0=\widetilde g.
\]
Scaling back gives $F^*g_0=g$.  After applying this biholomorphic isometry
of the source, we may therefore assume
\[
   g=m g_{\FS}.
\]
The standard $m$-th Veronese map
\[
   \nu_m:(\PP^2,m g_{\FS})
   \longrightarrow
   \PP^{\binom{m+2}{2}-1}
\]
is a full holomorphic isometry when the degree-$m$ monomials are taken with
the multinomial weights $\binom{m}{\alpha}^{1/2}$; with these weights
$\nu_m^*g_{\FS}=m g_{\FS}$.  After restricting the original target to
the projective span of $\varphi(X)$, the original map is also full.  By
Calabi's rigidity theorem \cite{Calabi1953}, the two full projective
realizations differ by a projective-unitary transformation, and hence
\[
   N=\binom{m+2}{2}-1
\]
in the linearly full realization.

If instead $X\simeq\PP^1\times\PP^1$, then
\[
   H=\OO(m,m).
\]
Let
\[
   g_0:=m(g_{\FS}\oplus g_{\FS}).
\]
Both $g$ and $g_0$ have Einstein constant $4/m$.  Hence
\[
   \widetilde g:=\frac4m\,g,
   \qquad
   \widetilde g_0:=\frac4m\,g_0
   =4(g_{\FS}\oplus g_{\FS})
\]
are K\"ahler--Einstein metrics in the normalized anticanonical class
$2\pi c_1(-K_X)$ with $\rho=\omega$.  By Bando--Mabuchi uniqueness
\cite{BandoMabuchi1987}, an automorphism $F$ of
$\PP^1\times\PP^1$ satisfies
\[
   F^*\widetilde g_0=\widetilde g.
\]
Scaling back gives $F^*g_0=g$.  After applying this source automorphism, we
may therefore assume
\[
   g=m(g_{\FS}\oplus g_{\FS}).
\]
The complete linear system $|\OO(m,m)|$, with the standard binomial
weights on each factor, gives the full Segre--Veronese holomorphic isometry
\[
   \PP^1\times\PP^1
   \longrightarrow
   \PP^{(m+1)^2-1},
\]
whose pullback metric is $m(g_{\FS}\oplus g_{\FS})$.  By Calabi's rigidity theorem \cite{Calabi1953}, the original full
realization differs from the standard one by a projective-unitary
transformation.  Hence
\[
   N=(m+1)^2-1.
\]
Both metrics are homogeneous.  This proves the theorem.
\end{proof}

\begin{corollary}[Homogeneity in complex dimension two]\label[corollary]{cor:homogeneity}
Every connected compact projectively induced K\"ahler--Einstein surface is homogeneous.
Equivalently, \Cref{conj:homogeneity} holds in complex dimension two.
\end{corollary}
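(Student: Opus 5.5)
The corollary is a direct restatement of \cref{thm:main}, so the plan is to deduce it from the theorem rather than reprove anything. Let $(X,g)$ be a connected compact Kähler--Einstein surface whose metric is projectively induced. By definition there is a holomorphic isometric immersion $\varphi:(X,g)\to(\PP^N,g_{\FS})$ with $N<\infty$. These are exactly the hypotheses of \cref{thm:main}. The only point to check is that the notion of ``projectively induced'' in the corollary matches the one in the statement of the theorem. I will check this against the definition given in the introduction: a holomorphic isometric immersion, with no fullness or embedding assumption. The theorem itself restricts the target to the span of $\varphi(X)$ when needed.

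Applying \cref{thm:main}, after a biholomorphic isometry $(X,g)$ is isometric to either $(\PP^2,m g_{\FS})$ or $(\PP^1\times\PP^1,m(g_{\FS}\oplus g_{\FS}))$ for some $m\ge1$. I then note that these are homogeneous. The unitary group $U(3)$ acts transitively and isometrically on $(\PP^2,g_{\FS})$, hence also on $(\PP^2,m g_{\FS})$, since scaling by a constant preserves isometries. Similarly $U(2)\times U(2)$ acts transitively by isometries on the product metric $m(g_{\FS}\oplus g_{\FS})$. Homogeneity is invariant under biholomorphic isometry, so it transfers back to $(X,g)$.

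For the ``equivalently'' clause, I would observe that \Cref{conj:homogeneity} restricted to complex dimension two asks precisely for this conclusion under precisely these hypotheses. So the two statements coincide. There is no real obstacle here. The only care needed is definitional: making sure ``homogeneous'' means that the isometry group (equivalently, the holomorphic isometry group) acts transitively, and that the theorem's ``$(X,g)$ is homogeneous'' already gives exactly this.
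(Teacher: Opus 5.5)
Your proposal is correct and matches the paper, which gives no separate argument for the corollary. It is read off directly from \cref{thm:main}, whose first assertion is already homogeneity and whose classification yields the two homogeneous models $(\PP^2,m g_{\FS})$ and $(\PP^1\times\PP^1,m(g_{\FS}\oplus g_{\FS}))$. Your transitivity check via $U(3)$ and $U(2)\times U(2)$, together with the matching of hypotheses, is exactly what is needed; note that, as in the paper, the ``equivalently'' clause reads \Cref{conj:homogeneity} with $X$ tacitly connected.
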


\appendix

\section{The degree-five anticanonical metric: explicit curvature calculation}
\label{app:Y5}

All matrix operations and differentiations in this appendix are carried out
in exact integer or rational arithmetic; no floating-point approximation or
numerical sampling is used.

Let
\[
Y=\operatorname{Bl}_{p_1,\dots,p_4}\PP^2,
\qquad
(-K_Y)^2=5,
\]
with
\[
p_1=(1:0:0),\qquad
p_2=(0:1:0),\qquad
p_3=(0:0:1),\qquad
p_4=(1:1:1).
\]

Using the Bauer--Catanese basis \cite[Proposition~2.2]{BauerCatanese2021}
\[
s_{ij}=x_ix_j(x_j-x_k),
\qquad
\{i,j,k\}=\{1,2,3\},
\]
ordered as
\[
\mathcal B=(s_{12},s_{13},s_{21},s_{23},s_{31},s_{32}),
\]
the $S_5$-invariant Hermitian product is unique up to scale because the
six-dimensional anticanonical representation is irreducible.

For completeness, we verify an invariant Gram matrix directly.  Bauer and
Catanese identify the space $V'$ of cubics through $p_1,\dots,p_4$ with
$V=H^0(Y,-K_Y)$ by $s\mapsto s\,(dx_1\wedge dx_2\wedge dx_3)^{-1}$, so that
an odd linear coordinate permutation acquires its determinant sign; for the
subgroup $S_3$ this is recorded as
$\tau(s_{ij})=\epsilon(\tau)\,s_{\tau(i)\tau(j)}$
\cite[Lemma~2.4]{BauerCatanese2021}, and the action of a $5$-cycle on the
basis $\mathcal B$ is computed in the proof of
\cite[Theorem~2.6]{BauerCatanese2021}.  With the convention that columns are
the coordinates of the images of the basis vectors, this gives the following
matrices of the anticanonical representation for the generators
\[
\tau=(1\,2),
\qquad
\sigma=(1\,2\,3\,4\,5)
\]
of $S_5$:
\[
P_\tau=
\begin{pmatrix*}[r]
 0 &  0 & -1 &  0 &  0 &  0 \\
 0 &  0 &  0 & -1 &  0 &  0 \\
-1 &  0 &  0 &  0 &  0 &  0 \\
 0 & -1 &  0 &  0 &  0 &  0 \\
 0 &  0 &  0 &  0 &  0 & -1 \\
 0 &  0 &  0 &  0 & -1 &  0
\end{pmatrix*},
\]
and
\[
P_\sigma=
\begin{pmatrix*}[r]
0 &  0 &  0 &  1 &  0 &  1 \\
0 &  1 &  0 &  0 &  1 &  0 \\
0 &  0 &  1 & -1 &  1 & -1 \\
0 & -1 &  0 &  0 &  0 &  0 \\
1 & -1 &  0 &  0 & -1 &  1 \\
0 &  0 &  0 & -1 &  0 &  0
\end{pmatrix*}.
\]

Consider the Hermitian matrix
\[
H_0=
\begin{pmatrix*}[r]
 5 & -2 &  2 & -1 & -1 &  2 \\
-2 &  5 & -1 &  2 &  2 & -1 \\
 2 & -1 &  5 & -2 &  2 & -1 \\
-1 &  2 & -2 &  5 & -1 &  2 \\
-1 &  2 &  2 & -1 &  5 & -2 \\
 2 & -1 & -1 &  2 & -2 &  5
\end{pmatrix*}.
\]
The eigenvalues of $H_0$ are
\[
1,\ 2,\ 2,\ 5,\ 10,\ 10,
\]
so $H_0$ is positive definite.  It satisfies
\[
P_\tau^*H_0P_\tau=H_0,
\qquad
P_\sigma^*H_0P_\sigma=H_0.
\]
Since $\tau$ and $\sigma$ generate $S_5$, this verifies directly the
$S_5$-invariance of $H_0$. By irreducibility, Schur's lemma shows that
every $S_5$-invariant Hermitian form is a real scalar multiple of $H_0$.
Thus $H_0$ defines, up to positive scale, the unique $S_5$-invariant
Hermitian product on the section space.

For this Hermitian product, the corresponding Fubini--Study kernel is
represented in the chosen basis by the inverse Hermitian matrix. Hence, up to
an irrelevant positive scalar,
\[
G=10H_0^{-1}
=
\begin{pmatrix*}[r]
 4 &  0 & -2 &  1 &  1 & -2 \\
 0 &  4 &  1 & -2 & -2 &  1 \\
-2 &  1 &  4 &  0 & -2 &  1 \\
 1 & -2 &  0 &  4 &  1 & -2 \\
 1 & -2 & -2 &  1 &  4 &  0 \\
-2 &  1 &  1 & -2 &  0 &  4
\end{pmatrix*}.
\]
Its eigenvalues are
\[
1,\ 1,\ 2,\ 5,\ 5,\ 10,
\]
so $G$ is positive definite.

On the affine chart $x_3=1$, with
\[
z=x_1,\qquad w=x_2,
\]
the section vector is
\[
s(z,w)=
\begin{pmatrix}
zw(w-1)\\
z(1-w)\\
zw(z-1)\\
w(1-z)\\
z(z-w)\\
w(w-z)
\end{pmatrix}.
\]
The local Fubini--Study potential is
\[
\Phi_{\mathrm{loc}}
=
\log\!\bigl(s(z,w)^*G\,s(z,w)\bigr).
\]
Thus
\[
g_{i\bar j}
=
\frac{\partial^2\Phi_{\mathrm{loc}}}
     {\partial z_i\partial\bar z_j},
\qquad
\Ric_{i\bar j}
=
-\frac{\partial^2}
       {\partial z_i\partial\bar z_j}
\log\det(g_{k\bar\ell}),
\]
where
\[
(z_1,z_2)=(z,w).
\]
With the conventions of \cref{sec:preliminaries}, a metric satisfying
$\Ric(g)=\lambda g$ there has
$\Ric_{i\bar j}=\tfrac{\lambda}{2}g_{i\bar j}$ in the notation just
fixed.  For the complete anticanonical polarization the expected Einstein
ratio in these coefficients is therefore $1$.

Write $N:=s^*Gs$ and
\[
D:=\det\bigl(u_a^*Gu_b\bigr)_{0\le a,b\le2},
\qquad
u_0=s,\quad u_1=\partial_zs,\quad u_2=\partial_ws.
\]
The Schur-complement identity \eqref{eq:gram-schur} gives
$\det(g_{i\bar j})=D/N^3$, hence
\[
\Ric_{i\bar j}
=3g_{i\bar j}
-\frac{\partial^2\log D}{\partial z_i\partial\bar z_j}.
\]
At $(z,w)=(2,0)$ one has
\[
s=(0,2,0,0,4,0),\qquad N=48,\qquad D=5184,
\qquad \det(g_{i\bar j})=\frac{3}{64}.
\]
Exact rational arithmetic then gives
\[
g_{i\bar j}
=
\begin{pmatrix*}[r]
\dfrac{1}{12} & \dfrac{1}{24}\\[6pt]
\dfrac{1}{24} & \dfrac{7}{12}
\end{pmatrix*},
\qquad
\Ric_{i\bar j}
=
\begin{pmatrix*}[r]
\dfrac{7}{108} & \dfrac{7}{216}\\[6pt]
\dfrac{7}{216} & \dfrac{67}{324}
\end{pmatrix*}.
\]

If the metric were Einstein, these two Hermitian matrices would be
proportional. However,
\[
\frac{\Ric_{1\bar1}}{g_{1\bar1}}
=
\frac79,
\qquad
\frac{\Ric_{2\bar2}}{g_{2\bar2}}
=
\frac{67}{189},
\]
and these numbers are different. Hence the unique $S_5$-compatible
Fubini--Study metric induced by the complete anticanonical embedding is not
Einstein. This proves \cref{lem:Y5-not-Einstein}.

\section*{Acknowledgments}

The second author has been partially supported by GNSAGA of INdAM
and by ProBiki of Fondazione di Sardegna.

\end{document}